\date{}
\begin{document}

\newtheorem{thm}{{\indent}Theorem}[section]
\newtheorem{cor}[thm]{{\indent}Corollary}
\newtheorem{lem}[thm]{{\indent}Lemma}
\newtheorem{prop}[thm]{{\indent}Proposition}
\newtheorem{result}[thm]{{\indent}Result}
\theoremstyle{definition}
\newtheorem{defn}[thm]{{\indent}Definition}
\newtheorem{rem}[thm]{{\indent}Remark}
\newtheorem{ex}[thm]{{\indent}Example}
\numberwithin{equation}{section}
\numberwithin{figure}{section}
\renewcommand{\figurename}{Fig.}
\newenvironment{keywords}{\par\textbf{keywords:}\mbox{  }}{ }
\newenvironment{Ack}{\par\textbf{Acknowledgements:}\mbox{  }}{ }
\newenvironment{coj}{\par\textbf{Conjecture:}\mbox{  }}{ }

\allowdisplaybreaks[4]

\title{Extremal problem of Hardy-Littlewood-Sobolev inequalities on compact Riemannian manifolds
\thanks{The project is supported by the National Natural Science Foundation of China (Grant No. 11571286, 11201443)  and Natural Science Foundation of Zhejiang Province(Grant
No. LY18A010013). }}
\author{Shutao Zhang\quad Yazhou Han\thanks{Corresponding author, Email: yazhou.han@gmail.com}\\
\footnotesize Department of Mathematics, College of
Science,\\[-0.15cm]
\footnotesize China Jiliang University, Hangzhou 310018, China}
\maketitle

\begin{abstract}
This paper studies  the existence of extremal problems for the Hardy-Littlewood-Sobolev inequalities on compact manifolds without boundary %Firstly, the bound of the sharp constant of the HLS inequalities are given. Then, we give a class of existence criteria for the extremal problems of the HLS inequalities
 via Concentration-Compactness principle.

\begin{keywords}
Hardy-Littlewood-Sobolev inequalities, Existence of extremal, Concentration-compactness principle, Compact manifold.
\end{keywords}

%\textbf{2000 Mathematics Subject Classification}
\end{abstract}

\normalsize

\section{Introduction}\label{Section_1}

It is well known that classical Sobolev inequalities and Hardy-Littlewood-
Sobolev(HLS) inequalities are basic tools in analysis and geometry, and their sharp
constants play essential role on certain geometric and probabilistic
information. In fact, in past decades, these sharp inequalities were applied extensively in the study of curvature equations, see, e.g. \cite{CGY2002, GV2003, Br2003, LL2005, GLW2004, GV2007, DM2008} and references therein. Recently, there have been some interesting results concerning the globally defined fractional operators such as fractional Yamabe problems, fractional prescribing curvature problems, fractional Paneitz operators, etc. (see, e.g. \cite{GMS2012, GQ2011, GZ2003, JLX2011a, JLX2011b, JLX2014c, JX2011} and references therein), which are closely related to singular integral operator.  In particular,   the sharp HLS inequality  is immediately applied to discuss a class of prescribing integral curvature problems by Zhu \cite{Zhu2016} and integral equations on bounded domain  in \cite {DZ2018,DGZ2018}. So, HLS inequalities play essential role in the global analysis of some operators of geometric interest.

Motivated by these studies, there are some extensions of classical HLS inequalities, such as HLS inequality on the upper half space, HLS on compact manifolds, reversed HLS inequality, or HLS inequality on the Heisenberg group, see \cite{DZ1, DZ2, HZ2016, Frank-Lieb2012, DGZ2017, NN2017a, NN2017b} for details. This paper is mainly devoted to discuss the sharp HLS inequality on compact manifolds without boundary.

\medskip

Let $(M^n, g)$ be a given compact  Riemmanian manifold without boundary, $\alpha\in (0,n)$  be a parameter and $|x-y|_g$  represent the distance from $x$ to $y$ on $M^n$ under metric $g$. In \cite{HZ2016}, Han and Zhu have introduced the following integral operator
\begin{equation}\label{Operator-main-term}
    I_\alpha f(x) =\int_{M^n} \frac{f(y)} {|x-y|_g^{n-\alpha}} dV_y
\end{equation}
and got the following Hardy-Littlewood-Sobolev inequalities:
\begin{prop}[Proposition 1.1. in \cite{HZ2016}]\label{prop sharp ineq}
 Assume that $\alpha \in (0, n)$,  $1<p<\frac n\alpha$ and $q$ is given by \begin{equation}\label{critical exponent} \frac1q=\frac1p-\frac\alpha n,
 \end{equation} then  there is a positive constant $C(\alpha,p,M^n,g)$, such that
\begin{equation}\label{Roughly HLS M}
    ||I_{\alpha} f||_{L^q(M^n)} \le C(\alpha, p, M^n, g)||f||_{L^p(M^n)}
\end{equation}
holds for all  $f\in L^p({M}^n).$
Moreover, for $1\le r<q$,  operator $I_{\alpha} : L^p(M^n)  \to  L^r(M^n)$ is a compact  embedding.
\end{prop}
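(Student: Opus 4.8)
\section*{Proof proposal}

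The plan is to reduce everything to the classical Euclidean Hardy--Littlewood--Sobolev inequality for the Riesz potential $J_\alpha h(x)=\int_{\mathbb R^n}|x-y|^{-(n-\alpha)}h(y)\,dy$, exploiting that on a compact manifold the geodesic distance is locally comparable to the Euclidean one. First I would fix $\delta\in(0,\mathrm{inj}(M))$ and split the kernel into a near-diagonal piece and a far piece,
\begin{equation*}
I_\alpha f = I_\alpha^{\mathrm{loc}}f + I_\alpha^{\mathrm{glob}}f,\qquad I_\alpha^{\mathrm{loc}}f(x)=\int_{|x-y|_g<\delta}\frac{f(y)}{|x-y|_g^{n-\alpha}}\,dV_y .
\end{equation*}
On the far piece the kernel is bounded by $\delta^{-(n-\alpha)}$, so H\"older together with $V:=\mathrm{Vol}(M)<\infty$ gives $\|I_\alpha^{\mathrm{glob}}f\|_{L^\infty}\le \delta^{-(n-\alpha)}V^{1/p'}\|f\|_{L^p}$, whence $\|I_\alpha^{\mathrm{glob}}f\|_{L^q}\le C\|f\|_{L^p}$ for free.

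For the local piece I would fix a finite atlas of geodesic normal charts $\{(U_i,\phi_i)\}_{i=1}^N$ with a subordinate partition of unity $\{\psi_i\}$, and take $\delta$ so small that the $\delta$-neighbourhood of $\mathrm{supp}\,\psi_i$ lies inside $U_i$ for each $i$; then $x\in\mathrm{supp}\,\psi_i$ and $|x-y|_g<\delta$ force $y\in U_i$. Writing $I_\alpha^{\mathrm{loc}}f=\sum_i\psi_i I_\alpha^{\mathrm{loc}}f$, the normal-coordinate bounds $\tfrac1C|x-y|_{\mathrm{euc}}\le|x-y|_g$ and $dV_y\le C\,dy$ dominate the kernel by a constant multiple of the Euclidean Riesz kernel, so that in the coordinates $\phi_i$ one has $|\psi_i I_\alpha^{\mathrm{loc}}f|\le C\,J_\alpha|\widetilde f_i|$ with $\widetilde f_i=(f\chi_{U_i})\circ\phi_i^{-1}$. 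Applying the Euclidean HLS inequality to $J_\alpha$, transferring back through $\phi_i$, and summing the finitely many terms yields $\|I_\alpha^{\mathrm{loc}}f\|_{L^q}\le C\|f\|_{L^p}$. The only delicate point here is the uniformity of the comparison constants, which is guaranteed by compactness (finiteness of the atlas).

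For the compactness statement I would approximate $I_\alpha$ in operator norm by operators with continuous kernels. Set $K_\epsilon(x,y)=\min\{|x-y|_g^{-(n-\alpha)},\epsilon^{-(n-\alpha)}\}$; since $(x,y)\mapsto|x-y|_g$ is continuous, $K_\epsilon$ is continuous on the compact set $M\times M$, so the operator $T_\epsilon$ it defines sends bounded subsets of $L^p$ to equicontinuous, uniformly bounded families, whence $T_\epsilon:L^p\to L^r$ is compact by Arzel\`a--Ascoli. The remainder $S_\epsilon=I_\alpha-T_\epsilon$ has kernel supported in $\{|x-y|_g<\epsilon\}$ and bounded by $|x-y|_g^{-(n-\alpha)}$; using the distance comparison again, for $t<\tfrac{n}{n-\alpha}$ one gets $\sup_x\|S_\epsilon(x,\cdot)\|_{L^t}\le C\,\epsilon^{\,n/t-(n-\alpha)}$ (and the same in the other variable by symmetry), so the generalized Young inequality gives
\begin{equation*}
\|S_\epsilon\|_{L^p\to L^s}\le C\,\epsilon^{\,n/t-(n-\alpha)},\qquad \frac1s=\frac1p+\frac1t-1 .
\end{equation*}

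The main obstacle is precisely that this smallness demands $t<\tfrac{n}{n-\alpha}$, i.e. $s<q$: the exponent $n/t-(n-\alpha)$ is positive exactly in the subcritical range, reflecting that $I_\alpha:L^p\to L^q$ is genuinely not compact at the critical exponent. Given $r\in(p,q)$ one solves $\tfrac1t=1+\tfrac1r-\tfrac1p$ and checks $t\in(1,\tfrac{n}{n-\alpha})$, so $\|I_\alpha-T_\epsilon\|_{L^p\to L^r}\to0$ and $I_\alpha:L^p\to L^r$ is compact as a norm limit of compact operators. For the remaining range $1\le r\le p$ I would fix some $r_0\in(p,q)$, use the compactness of $I_\alpha:L^p\to L^{r_0}$ just obtained, and compose with the bounded embedding $L^{r_0}\hookrightarrow L^r$ (bounded since $V<\infty$), which preserves compactness; this covers all $1\le r<q$.
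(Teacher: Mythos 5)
Your proposal is correct, but it is worth separating its two halves when comparing with the paper. For the inequality \eqref{Roughly HLS M} itself the paper offers no proof at all --- it is quoted verbatim from Proposition 1.1 of \cite{HZ2016} --- so your first part (splitting off the far piece where the kernel is bounded by $\delta^{-(n-\alpha)}$, then localizing the near-diagonal piece in a finite atlas of normal charts and invoking the Euclidean HLS inequality) supplies the standard localization argument that the paper simply cites. For the compactness claim the paper does give its own proof (Proposition \ref{pro compact}), and its decomposition is the same as yours: truncate the kernel at height $\rho^{\alpha-n}$ into a bounded far part $K^\rho$ and a singular near-diagonal part $K_\rho$, and kill the singular part by Young's inequality (Lemma \ref{lem Young}), whose gain $\rho^{(\alpha-n)+n/s}$ is positive exactly because $r<q$ forces $s<\tfrac{n}{n-\alpha}$ --- precisely your exponent bookkeeping with $t$ in place of $s$. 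Where you genuinely differ is the mechanism for the truncated part: the paper argues sequentially (weak $L^p$ convergence gives pointwise convergence of $K^\rho * f_m$, and a uniform bound plus dominated convergence upgrades this to strong $L^r$ convergence), whereas you show the truncated operator is itself compact via Arzel\`a--Ascoli and conclude that $I_\alpha$ is a norm limit of compact operators. The two routes are interchangeable here, but yours buys two things: an operator-norm quantification of the approximation, and --- a real improvement --- an explicit treatment of the range $1\le r\le p$ by composing with the bounded embedding $L^{r_0}\hookrightarrow L^r$, a case the paper's proof silently skips, since its requirement $s>1$ in $\tfrac1r+1=\tfrac1p+\tfrac1s$ forces $r>p$.
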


As is well known, it is important to study the extremal problems of \eqref{Roughly HLS M}, which can be stated as follows:
\begin{align}\label{extremal problem}
    N_{p,\alpha,M}:=&\sup\{\|I_\alpha f\|_{L^q(M^n)}:\ \|f\|_{L^p(M^n)}=1\} \nonumber\\
    :=&\sup \Bigl\{ \frac{\|I_\alpha f\|_{L^q(M^n)}} {\|f\|_{L^p(M^n)}}:\ f\in L^p(M^n) \backslash \{0\}\Bigr\}.
\end{align}
Equivalently, we can stated also as
\begin{align}
    N_{p,\alpha,M} :=& \sup \Bigl\{\Bigl| \int_{M^n}\int_{M^n} f(x) g(y) |x-y|_g^{\alpha-n} dV_x dV_y \Bigr|:\ \|f\|_p= \|g\|_t =1 \Bigr\} \label{defn extremal 2}\\
    :=&\sup_{\|f\|_p>0, \|g\|_t>0} \frac{\Bigl|\int_{M^n}\int_{M^n} f(x) g(y) |x-y|_g^{\alpha-n} dV_x dV_y\Bigr|} {\|f\|_p \|g\|_t},\label{defn extremal 3}
\end{align}
where $t=\frac q{q-1}$. %is the dual exponent of $q$.
In particular, we denote $N_{p,\alpha,\mathbb{R}^n}$ as $N_{p,\alpha}$.

In \cite{HZ2016}, Han and Zhu have discussed the extremal problems \eqref{extremal problem} for the conformal case, i.e. the case $p=t$ and $f=g$. Then as an application, they studied a class of integral curvature problems. Particularly, they give a new proof for the Yamabe problem on compact locally conformally flat manifold.

This paper will deal with the remaining cases. Firstly, we will get the following estimate to the sharp constant.
\begin{prop}[Estimate]\label{pro estimate}
$N_{p,\alpha,M}\geq N_{p,\alpha}$.
\end{prop}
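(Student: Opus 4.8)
The plan is to exploit the fact that on a sufficiently small geodesic ball the manifold $(M^n,g)$ is a small perturbation of Euclidean space, together with the scale invariance of the Euclidean HLS functional in \eqref{defn extremal 2}. The strategy is to take near-optimizers for $N_{p,\alpha}=N_{p,\alpha,\mathbb{R}^n}$, concentrate them at a single point of $M$, and show that the quotient \eqref{defn extremal 3} computed on $M$ converges to $N_{p,\alpha}$ as the concentration scale shrinks.

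First I would fix a point $P\in M$ and work in geodesic normal coordinates centered at $P$, in which the volume element satisfies $\sqrt{\det g(x)}=1+O(|x|^2)$ and the geodesic distance obeys a comparison of the form $(1-C(|x|^2+|y|^2))|x-y|\le |x-y|_g\le (1+C(|x|^2+|y|^2))|x-y|$ for $x,y$ in a small coordinate ball inside the injectivity radius. Given $\delta>0$, using the definition of $N_{p,\alpha}$ as a supremum over $\mathbb{R}^n$ and the density of compactly supported functions, I would choose $\tilde f,\tilde g$ supported in the Euclidean unit ball with
\begin{equation*}
\frac{\bigl|\iint_{\mathbb{R}^n} \tilde f(u)\tilde g(v)|u-v|^{\alpha-n}\,du\,dv\bigr|}{\|\tilde f\|_p\,\|\tilde g\|_t}\ \ge\ N_{p,\alpha}-\delta .
\end{equation*}

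For $\epsilon>0$ I would set $f_\epsilon(x)=\epsilon^{-n/p}\tilde f(x/\epsilon)$ and $g_\epsilon(y)=\epsilon^{-n/t}\tilde g(y/\epsilon)$, supported in the ball $B_\epsilon$, and transplant them to $M$ through the normal chart. This normalization keeps $\|f_\epsilon\|_p=\|\tilde f\|_p$ and $\|g_\epsilon\|_t=\|\tilde g\|_t$ fixed, and the change of variables $u=x/\epsilon,\ v=y/\epsilon$, combined with the identity $\tfrac np+\tfrac nt=n+\alpha$ (from \eqref{critical exponent} and $t=\tfrac{q}{q-1}$), makes the Euclidean double integral of $f_\epsilon,g_\epsilon$ independent of $\epsilon$ and equal to that of $\tilde f,\tilde g$. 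Comparing the manifold quantities to the Euclidean ones on the shrinking support yields $\|f_\epsilon\|_{L^p(M)}^p=\|\tilde f\|_p^p(1+O(\epsilon^2))$, likewise for $g_\epsilon$, and
\begin{equation*}
\iint_{M^n}\!\!\int f_\epsilon(x)g_\epsilon(y)\,|x-y|_g^{\alpha-n}\,dV_x\,dV_y=(1+O(\epsilon^2))\iint_{\mathbb{R}^n} \tilde f(u)\tilde g(v)|u-v|^{\alpha-n}\,du\,dv .
\end{equation*}
Hence the quotient \eqref{defn extremal 3} evaluated at $(f_\epsilon,g_\epsilon)$ on $M$ is at least $(N_{p,\alpha}-\delta)(1+O(\epsilon^2))$; letting $\epsilon\to0$ and then $\delta\to0$ gives $N_{p,\alpha,M}\ge N_{p,\alpha}$.

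The main obstacle is the uniform control of the metric distortion, namely verifying that the factor relating $|x-y|_g$ to $|x-y|$ is $1+O(\epsilon^2)$ uniformly over all pairs $(x,y)$ in the support $B_\epsilon$, rather than merely pointwise; this follows from the normal-coordinate expansion of the distance function but must be stated carefully. A secondary technical point is the truncation loss, since the Euclidean extremals decay only polynomially and are not compactly supported; I would handle this by approximating the near-optimizers in $L^p\times L^t$ and invoking the continuity of the bilinear form, which is exactly the boundedness furnished by the Euclidean HLS inequality.
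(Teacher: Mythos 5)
Your proposal is correct, and it follows the same overall trial-function strategy as the paper (concentrate Euclidean near-extremals at a point $P\in M$ and transplant via normal coordinates), but the key ingredient for the Euclidean part is genuinely different. The paper invokes Lieb's theorem on the \emph{existence} of an exact extremal pair $(f,g)$ for $N_{p,\alpha}$, truncates the rescaled extremals $f_\lambda, g_\lambda$ to a fixed ball $B_\delta$, and uses the Euler--Lagrange equations \eqref{Extremal Rn 2} to evaluate the truncation losses exactly (the cross terms become $N_{p,\alpha}\int_{|x|>\delta}f_\lambda^p\,dx$ and $N_{p,\alpha}\int_{|y|>\delta}g_\lambda^t\,dy$, which vanish as $\lambda\to 0$); it then sends $\lambda\to 0$ and $\epsilon\to 0$ in two separate limits, with the metric distortion $\epsilon$ fixed during the first. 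You instead avoid any appeal to attainment of $N_{p,\alpha}$: you produce compactly supported near-optimizers by density of such functions in $L^p\times L^t$ together with the continuity of the bilinear form (i.e., the Euclidean HLS bound itself), and then a single rescaling into $B_\epsilon$ makes both the support and the metric distortion shrink simultaneously, with the sharper $1+O(\epsilon^2)$ normal-coordinate estimates. What each buys: your route is more elementary and more robust---it would work verbatim in situations where existence of Euclidean extremizers is unknown, and it sidesteps the Euler--Lagrange identities entirely---while the paper's use of exact extremizers yields explicit, exact expressions for the truncation errors rather than an abstract $\delta$-loss. Two small points to tighten: (a) you should take $\tilde f,\tilde g\geq 0$ (replacing near-optimizers by their absolute values only increases the quotient, since the kernel is positive), because the multiplicative relation $(1+O(\epsilon^2))$ between the manifold and Euclidean double integrals as you state it presupposes a sign; for signed functions one would instead get an additive error $O(\epsilon^2)\|\tilde f\|_p\|\tilde g\|_t$, which also suffices; (b) the lower distance bound $(1-C\epsilon^2)|x-y|\leq |x-y|_g$ requires that minimizing geodesics between points of the coordinate ball stay in the region where the metric comparison holds, which is standard for sufficiently small balls but worth stating. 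Incidentally, your norm-preserving scaling $f_\epsilon(x)=\epsilon^{-n/p}\tilde f(x/\epsilon)$ is the correct one (the exponent $\lambda^{-p/n}$ in \eqref{Extremal Rn 3} appears to be a typo in the paper).
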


Then, similar to the existence criteria of classical Yamabe problem, we will give the following the existence criteria of the extremal problems \eqref{extremal problem} by the Concentration-Compactness principle introduced by Lions (see \cite{Lions3, Lions4}). %\cite{Lions1, Lions2, Lions3, Lions4}
\begin{thm}[Criteria of Existence]\label{thm exist}
Under the assumption of Proposition \ref{prop sharp ineq} and if $N_{p,\alpha,M}>N_{p,\alpha}$, then the supremum is attained, i.e., there exists some function $f(x)\in L^p(M^n)$ such that $N_{p,\alpha,M}=\frac{\|I_\alpha f\|_{L^q(M^n)}} {\|f\|_{L^p(M^n)}}$.
\end{thm}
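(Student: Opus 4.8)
The plan is to run the direct method on a maximizing sequence and to use the Concentration-Compactness principle to rule out the only loss of compactness possible on a compact manifold, namely concentration at points. Let $\{f_k\}\subset L^p(M^n)$ be a maximizing sequence for \eqref{extremal problem}, normalized by $\|f_k\|_{L^p(M^n)}=1$ and $\|I_\alpha f_k\|_{L^q(M^n)}\to N_{p,\alpha,M}=:N$. Since $|I_\alpha f_k|\le I_\alpha|f_k|$ pointwise and $\||f_k|\|_p=\|f_k\|_p$, I may assume $f_k\ge 0$. As $1<p<\infty$, the space $L^p(M^n)$ is reflexive, so after passing to a subsequence $f_k\rightharpoonup f$ weakly in $L^p(M^n)$; moreover the bounded families of measures $\mu_k:=|f_k|^p\,dV$ and $\nu_k:=|I_\alpha f_k|^q\,dV$ converge weakly-$*$ to finite nonnegative measures $\mu,\nu$ with $\mu(M^n)=1$ and $\nu(M^n)=N^q$. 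Because $M^n$ is compact there is no escape of mass to infinity, so the dichotomy alternative in Lions' sense reduces to detecting atoms of $\nu$.

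First I would record the compactness furnished by Proposition \ref{prop sharp ineq}: since $I_\alpha:L^p(M^n)\to L^r(M^n)$ is compact for every $r<q$, the weak convergence $f_k\rightharpoonup f$ yields $I_\alpha f_k\to I_\alpha f$ strongly in $L^r(M^n)$ and, along a further subsequence, almost everywhere. Consequently the defect $\nu-|I_\alpha f|^q\,dV$ is a nonnegative measure carried by an at most countable set of points. Applying the second Concentration-Compactness principle, adapted to the nonlocal operator $I_\alpha$, I would obtain points $\{x_j\}$ and weights $\nu_j,\mu_j\ge 0$ with
\begin{equation*}
\nu=|I_\alpha f|^q\,dV+\sum_j \nu_j\,\delta_{x_j},\qquad \mu\ge |f|^p\,dV+\sum_j \mu_j\,\delta_{x_j},\qquad \nu_j^{1/q}\le N_{p,\alpha}\,\mu_j^{1/p}.
\end{equation*}
The crucial feature is that the constant controlling each atom is the \emph{flat} constant $N_{p,\alpha}$: near $x_j$ the metric $g$ is uniformly close to the Euclidean one, so localized testing against the sharp $\mathbb{R}^n$ inequality produces $N_{p,\alpha}$ in the limit as the cut-off shrinks.

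With this decomposition the elimination of atoms is short. Set $a:=\sum_j\mu_j\in[0,1]$. Since $q>p$, the exponent $\theta:=q/p>1$ makes $t\mapsto t^{\theta}$ superadditive, whence $\sum_j\mu_j^{\theta}\le a^{\theta}$, and $\int_{M^n}|f|^p\,dV\le \mu(M^n)-a=1-a$, so \eqref{Roughly HLS M} on $M^n$ gives $\int_{M^n}|I_\alpha f|^q\,dV\le N^q\|f\|_p^{q}\le N^q(1-a)^{\theta}$. Summing the two parts of $\nu$,
\begin{equation*}
N^q=\nu(M^n)=\int_{M^n}|I_\alpha f|^q\,dV+\sum_j\nu_j\le N^q(1-a)^{\theta}+N_{p,\alpha}^{q}\,a^{\theta},
\end{equation*}
so that $1\le(1-a)^{\theta}+\beta\,a^{\theta}$ with $\beta:=(N_{p,\alpha}/N)^{q}<1$ by the hypothesis $N_{p,\alpha,M}>N_{p,\alpha}$. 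For $\theta>1$ one has $(1-a)^{\theta}+a^{\theta}\le 1$ on $[0,1]$ with equality only at $a\in\{0,1\}$; combined with $\beta<1$ this forces $a=0$, hence all $\mu_j=\nu_j=0$. Therefore $\nu=|I_\alpha f|^q\,dV$ has no singular part, $\|I_\alpha f\|_{L^q(M^n)}^q=N^q$, and then $N=\|I_\alpha f\|_q\le N\|f\|_p\le N$ forces $\|f\|_p=1$ with $f\neq 0$, so $f$ attains the supremum.

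The main obstacle, and the step I would spend most effort on, is establishing the Concentration-Compactness decomposition with the sharp flat constant for the \emph{nonlocal} operator $I_\alpha$. Unlike the local Sobolev setting, $I_\alpha f_k(x)$ depends on $f_k$ over all of $M^n$, so I must show that an atom of the output measure $\nu$ can only be created by genuine concentration of the input measure $|f_k|^p$ at the same point, and I must control the nonlocal tail contribution when localizing with a cut-off. Verifying that the local HLS constant near $x_j$ converges to $N_{p,\alpha}$ as the cut-off radius tends to zero, using the comparability of $|x-y|_g$ with the Euclidean distance in normal coordinates together with Proposition \ref{pro estimate}, is the technical heart of the argument.
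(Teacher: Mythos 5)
Your proposal is correct and follows essentially the same route as the paper: a normalized nonnegative maximizing sequence, weak limits of $|f_m|^p\,dV$ and $|I_\alpha f_m|^q\,dV$, the Concentration-Compactness decomposition with the flat constant $N_{p,\alpha}$ controlling the atoms (the paper's Lemma \ref{lem Concentrate-Compact}), and then elimination of atoms via superadditivity of $t\mapsto t^{q/p}$ combined with the strict gap $N_{p,\alpha,M}>N_{p,\alpha}$. Your aggregated formulation $1\le(1-a)^{\theta}+\beta a^{\theta}$ with $a=\sum_j\mu_j$ is just a streamlined rewriting of the paper's chained contradiction in \eqref{formula exist 4}, and your final step recovering $\|f\|_p=1$ matches the paper's conclusion.
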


\begin{rem}
Let $G_x^g(y)=n(n-2)\omega_n\Gamma_x^g(y)$, where $\Gamma_x^g(y)$ is the Green's function with pole at $x$ for the conformal Laplacian operator $-\Delta_g+\frac{n-2}{4(n-1)}R_g$ and $\omega_n$ is the volume of the unit ball.
As discussed in \cite{HZ2016}, for the operator
    $$I_{M^n,g,\alpha}=\int_{M^n} \bigl[G_x^g(y)\bigr]^{\frac{\alpha-n}{2-n}} g(y) dV_y,$$
we can also get the similar results of estimate (Proposition \ref{pro estimate}) and existence criteria (Theorem \ref{thm exist}). Since the details of the proof is similar, so we omit it for conciseness.
\end{rem}

The plan of the paper is following. In Section \ref{Section_2}, we introduce some known facts and give a new proof of the compactness of operator \eqref{Operator-main-term} for convenience. Then, we present our Concentration-Compactness Lemma in the Section \ref{appendix CC-lemma}. Finally, Section \ref{Section_3} is devoted to get the estimate (Proposition \ref{pro estimate}) and prove the existence of extremal problem (Theorem \ref{thm exist}). %For completeness, we give the corresponding Concentration-Compactness Lemma in the Appendix \ref{appendix CC-lemma}.

\section{Preliminary}\label{Section_2}

Firstly, we recall the existence of the extremal problem of Classical Hardy-Littlewood-Sobolev inequalities on $\mathbb{R}^n$ as follows.
\begin{thm}[Theorem 2.3 of \cite{Lieb1983} \& Theorem 2.1 of \cite{Lions4}] There exist a pair of nonnegative functions $f\in L^p(\mathbb{R}^n)$ and $g\in L^t(\mathbb{R}^n)$ such that
\begin{equation}\label{Extremal Rn 1}\begin{cases}
    \int_{\mathbb{R}^n}|f|^p dx=\int_{\mathbb{R}^n}|g|^t dy=1\\ N_{p,\alpha}=\int_{\mathbb{R}^n}\int_{\mathbb{R}^n} f(x) g(y)|x-y|^{\alpha-n} dx dy.
\end{cases}\end{equation}
Hence, Extremal pair satisfies the Euler-Lagrange equation
\begin{equation}\label{Extremal Rn 2}
    \begin{cases}
        |x|^{\alpha-n}*g=N_{p,\alpha} f^{p-1}(x),\\
        |x|^{\alpha-n}*f=N_{p,\alpha} g^{t-1}(x).
    \end{cases}
\end{equation}
Furthermore, by scaling, we know that function pairs
\begin{equation}\label{Extremal Rn 3}
    f_\lambda(x)=\lambda^{-p/n}f(x/\lambda),\quad g_\lambda(y)=\lambda^{-t/n}g(y/\lambda),\quad \forall\lambda>0
\end{equation}
also satisfy \eqref{Extremal Rn 1} and \eqref{Extremal Rn 2}.
\end{thm}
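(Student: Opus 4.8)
The plan is to follow the classical variational strategy of Lieb, combining symmetric-decreasing rearrangement with a compactness analysis that neutralises the conformal invariances of the functional. Write $\Phi(f,g)=\int_{\mathbb{R}^n}\int_{\mathbb{R}^n} f(x)g(y)|x-y|^{\alpha-n}\,dx\,dy$. First I would reduce to nonnegative, radially symmetric, decreasing competitors. By the Riesz rearrangement inequality, since the kernel $|x-y|^{\alpha-n}$ is already symmetric-decreasing in $x-y$, replacing $f,g$ by their symmetric-decreasing rearrangements $f^*,g^*$ gives $\Phi(f,g)\le\Phi(f^*,g^*)$ while preserving $\|f\|_p=\|f^*\|_p$ and $\|g\|_t=\|g^*\|_t$. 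Hence a maximizing sequence may be taken among nonnegative symmetric-decreasing functions normalized by $\|f_k\|_p=\|g_k\|_t=1$.

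Second, I would address the non-compactness. The functional $\Phi$ and the two constraints are invariant under the simultaneous dilations \eqref{Extremal Rn 3} and under common translations; restricting to functions centred at the origin removes the translation degeneracy, leaving only the one-parameter dilation group. To fix the scale I would impose a normalization on the maximizing sequence, for instance requiring each $f_k$ to cross a fixed level at radius $1$, which is admissible by the dilation invariance. The crucial analytic input is the pointwise decay enjoyed by symmetric-decreasing functions: from $\|f_k\|_p=1$ one gets $f_k(x)\le C|x|^{-n/p}$, and similarly $g_k(y)\le C|y|^{-n/t}$. These uniform bounds, together with Helly's selection theorem for monotone functions, yield a subsequence converging almost everywhere and weakly in $L^p$ and $L^t$ to limits $f,g$.

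Third, and this is the main obstacle, I must show that the weak limits are nontrivial and that $\Phi(f_k,g_k)\to\Phi(f,g)=N_{p,\alpha}$, i.e. that no mass is lost through vanishing, spreading, or escape along the dilation group. The pointwise bounds prevent concentration at the origin or spreading to infinity from carrying the full value of the functional, while the scale normalization prevents the sequence from degenerating; thus if the limit were trivial one would be forced to conclude $\Phi(f_k,g_k)\to 0$, contradicting $\Phi(f_k,g_k)\to N_{p,\alpha}>0$. A Brezis--Lieb type splitting then forces the $L^p$ and $L^t$ norms to be preserved in the limit, so that $\|f\|_p=\|g\|_t=1$ and $(f,g)$ is extremal. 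Equivalently, in the concentration-compactness language of Lions, the dichotomy alternative is excluded by the strict inequality $N_{p,\alpha}(1)>N_{p,\alpha}(\theta)+N_{p,\alpha}(1-\theta)$ for $\theta\in(0,1)$, where $N_{p,\alpha}(\theta)$ denotes the analogous supremum under mass constraint $\theta$; this strict superadditivity in the mass is inherited from the homogeneity of the kernel, and leaves only the compact case after the dilation and translation adjustment. This produces an extremal pair satisfying \eqref{Extremal Rn 1}.

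Finally, the Euler--Lagrange system \eqref{Extremal Rn 2} follows by computing the first variation of $\Phi$ subject to the constraints $\|f\|_p=1$ and $\|g\|_t=1$, the two Lagrange multipliers being forced equal to $N_{p,\alpha}$ upon testing the equations against $f$ and $g$ themselves. The invariance statement for the scaled pairs \eqref{Extremal Rn 3} is then a direct change of variables: because $|x-y|^{\alpha-n}$ is homogeneous of degree $\alpha-n$, both $\Phi$ and the two norm constraints are left unchanged under the dilation, so the scaled pairs again realise \eqref{Extremal Rn 1} and \eqref{Extremal Rn 2}.
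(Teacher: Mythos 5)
On the comparison itself: the paper contains \emph{no proof} of this statement. It is imported verbatim as a known result (Theorem 2.3 of Lieb's 1983 paper together with Theorem 2.1 of Lions' limit-case paper, Part 2) and used downstream as a black box. So your proposal is not competing against an argument in the paper; it can only be measured against the classical proofs it tries to reconstruct. Measured that way, your outline names exactly the right ingredients, in the right order: Riesz rearrangement, the decay bound $f_k(x)\le C|x|^{-n/p}$ for normalized symmetric-decreasing functions, Helly selection, a dilation normalization, exclusion of vanishing/dichotomy, then the first variation with both multipliers identified as $N_{p,\alpha}$ by testing against $f$ and $g$. The Euler--Lagrange and scaling parts are correct, with one caveat: the exponents in \eqref{Extremal Rn 3} as printed, $\lambda^{-p/n}$ and $\lambda^{-t/n}$, do not preserve the norms; the invariance you assert holds for $\lambda^{-n/p}$ and $\lambda^{-n/t}$, which is evidently what is intended.

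The genuine gap is in your third step, which is where the entire difficulty of the theorem lives. You assert: ``if the limit were trivial one would be forced to conclude $\Phi(f_k,g_k)\to 0$.'' With only the tools you have put on the table (the decay bounds plus a.e.\ convergence), this implication is false: take any fixed pair with $\Phi(f,g)>0$ and dilate it, $f_k=\lambda_k^{-n/p}f(\cdot/\lambda_k)$, $g_k=\lambda_k^{-n/t}g(\cdot/\lambda_k)$, $\lambda_k\to\infty$; then $f_k,g_k\to 0$ a.e., both satisfy the uniform decay bounds, yet $\Phi(f_k,g_k)$ is constant. So everything rests on the scale normalization doing quantitative work, and your sketch does not supply it. Two distinct things must be proved rather than asserted: (i) the normalization forces the Helly limit to be nonzero (this part is easy: if $f_k(1)\ge\delta$ then $f_k\ge\delta$ on the unit ball by monotonicity, so $f\not\equiv 0$); and (ii) no fraction of the mass escapes to scales $0$ or $\infty$, i.e.\ $\|f\|_p=\|g\|_t=1$ and $\Phi(f_k,g_k)\to\Phi(f,g)$. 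Point (ii) does not follow from (i): a sequence splitting its mass between two widely separated scales can satisfy any single-scale normalization on one piece while the other piece vanishes in the limit, leaving $\|f\|_p^p\le\theta<1$; Brezis--Lieb then only tells you how the mass splits, not that the split is trivial. Ruling this out is exactly the dichotomy argument. Your appeal to strict superadditivity is the right mechanism --- indeed, by homogeneity $N_{p,\alpha}(\theta)=\theta^{1/p+1/t}N_{p,\alpha}(1)$ with $\tfrac1p+\tfrac1t=1+\tfrac\alpha n>1$, so $N_{p,\alpha}(\theta)+N_{p,\alpha}(1-\theta)<N_{p,\alpha}(1)$ --- but to use it for this \emph{bilinear} functional you must also show that the cross-interaction between the two scale pieces tends to zero and that the mass splittings of $f_k$ and of $g_k$ occur at comparable scales, neither of which appears in your text. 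This is precisely the content of Lieb's chain of lemmas and of Lions' limit-case machinery; as written, your proposal names the destination but does not make the journey.
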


For convenience, we introduce the following Young's inequality.
\begin{lem}[Young's inequality, Lemma 2.1 of \cite{HZ2016}]\label{lem Young} For a given compact manifold ($M^n, g)$,  define
$$g*h(x)=\int_{M^n} g(y) h(|y-x|_g) dV_y.$$
Then, there is a constant $C>0$, such that
$$||g*h||_{L^r} \le C||g||_{L^q} \cdot ||h||_{L^p},$$
where $p, q, r \in (1, \infty)$ and satisfy
$1+\frac 1r =\frac 1q+\frac 1p.$
\end{lem}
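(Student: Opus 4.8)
The plan is to transplant the classical Euclidean proof of Young's inequality, where the only genuinely new ingredient is a geometric substitute for translation invariance. Writing $d(x,y)=|x-y|_g$ and recording the constraint as $\tfrac1r=\tfrac1p+\tfrac1q-1$, I first split the integrand of $g*h(x)=\int_{M^n}g(y)h(d(x,y))\,dV_y$ pointwise as
$$|g(y)|\,|h(d(x,y))|=\bigl[|g(y)|^q|h(d(x,y))|^p\bigr]^{1/r}\;|g(y)|^{1-q/r}\;|h(d(x,y))|^{1-p/r},$$
and apply Hölder's inequality in $y$ with the three exponents $r$, $\bigl(\tfrac1q-\tfrac1r\bigr)^{-1}$, $\bigl(\tfrac1p-\tfrac1r\bigr)^{-1}$, whose reciprocals sum to $1$ precisely because of the scaling relation. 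This yields the pointwise estimate
$$|g*h(x)|\le\Bigl(\int_{M^n}|g(y)|^q|h(d(x,y))|^p\,dV_y\Bigr)^{1/r}\,\|g\|_q^{1-q/r}\,\Bigl(\int_{M^n}|h(d(x,y))|^p\,dV_y\Bigr)^{1/p-1/r}.$$

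Next I would control the last factor, which still depends on $x$, by the key uniform bound
$$\sup_{x\in M^n}\int_{M^n}|h(d(x,y))|^p\,dV_y\le C\,\|h\|_p^p,\qquad \sup_{y\in M^n}\int_{M^n}|h(d(x,y))|^p\,dV_x\le C\,\|h\|_p^p,$$
the two statements being identical since $d$ is symmetric. Granting this, the third factor above is bounded pointwise by $C\|h\|_p^{\,1-p/r}$; I then raise the pointwise inequality to the power $r$, integrate in $x$, and apply Fubini, using the second uniform bound to peel off $\int_{M^n}|h(d(x,y))|^p\,dV_x\le C\|h\|_p^p$ and leave $\|g\|_q^q$. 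Collecting exponents via $(1-q/r)r+q=r$ and $(1-p/r)+p=r$ gives exactly $\|g*h\|_r^r\le C\|g\|_q^r\|h\|_p^r$, i.e.\ the claim.

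The crux, and the only place compactness is essential, is the displayed uniform kernel bound: in $\mathbb R^n$ it is the trivial translation-invariant identity $\int|h(|z|)|^p\,dz=\|h\|_p^p$, but on $M^n$ there is no translation to invoke. I would obtain it from comparison geometry: on a compact manifold the Ricci (indeed sectional) curvature is bounded below, so Bishop--Gromov gives $\mathrm{Vol}(B_g(y,\rho))\le C\rho^n$ uniformly in $y$ for $\rho\le D:=\mathrm{diam}(M^n)$, equivalently the geodesic-sphere Jacobian in polar coordinates about $y$ satisfies $J(\rho,\theta)\le C\rho^{n-1}$. Passing to geodesic polar coordinates about $y$ (the cut locus being null) then yields
$$\int_{M^n}|h(d(x,y))|^p\,dV_x=\int_{S^{n-1}}\!\!\int_0^{\mathrm{cut}(\theta)}|h(\rho)|^p\,J(\rho,\theta)\,d\rho\,d\theta\le C\int_0^{D}|h(\rho)|^p\rho^{n-1}\,d\rho,$$
and the right-hand side is, up to a dimensional constant, exactly $\|h\|_p^p$ once $h$ is read as a radial profile against the measure $\rho^{n-1}d\rho$.

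I expect this geometric estimate to be the main obstacle: one must verify that the volume comparison holds with a constant independent of the base point $y$, which is where the compactness of $M^n$ (uniform lower curvature bound, positive injectivity radius, finite diameter) is genuinely used; the Hölder--Fubini bookkeeping is then routine. If one prefers to avoid comparison geometry, the same uniform bound follows by covering $M^n$ with finitely many normal-coordinate charts in which $d$ and $dV_g$ are comparable to their Euclidean counterparts, splitting $\int_{M^n}|h(d(x,y))|^p\,dV_x$ into the part within the injectivity radius of $y$ (handled by the local Euclidean estimate) and the part outside it, on which $d(x,y)\ge\iota>0$ so that $|h|^p$ is integrated over the compact range $[\iota,D]$ against a bounded volume.
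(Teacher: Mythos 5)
The paper never proves this lemma: it is imported verbatim as Lemma~2.1 of \cite{HZ2016}, so there is no internal proof to compare against, and your task was effectively to reconstruct the proof from that reference. What you wrote is a correct, self-contained proof and is the natural adaptation of the classical Euclidean argument: the three-exponent H\"older splitting with exponents $r$, $(1/q-1/r)^{-1}$, $(1/p-1/r)^{-1}$ is the standard scheme, and you correctly isolated the single genuinely geometric ingredient, namely the uniform kernel bound $\sup_{y\in M^n}\int_{M^n}|h(d(x,y))|^p\,dV_x\le C\|h\|_p^p$ that replaces translation invariance. Your derivation of that bound is sound: compactness gives a Ricci lower bound and finite diameter, so Bishop's comparison yields $J(\rho,\theta)\le C\rho^{n-1}$ within the cut locus uniformly in the base point, and the cut locus is null. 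You also resolved, in the only way that makes the lemma both true and usable, the ambiguity in the statement: $\|h\|_{L^p}$ must be read as the radial norm $\bigl(c\int_0^D|h(\rho)|^p\rho^{n-1}\,d\rho\bigr)^{1/p}$ (equivalently, up to constants, the base-point-uniform norm), which is precisely the reading under which the lemma is applied in the proof of Proposition~\ref{pro compact} to $K_\rho(t)=t^{\alpha-n}\chi_{\{t\le \rho\}}$.

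Two blemishes, neither fatal. First, a typo: the second exponent identity should be $(1-p/r)r+p=r$, not $(1-p/r)+p=r$. Second, your closing ``chart-covering'' alternative is glibber than the main argument: to bound $\int_{d(x,y)\ge\iota}|h(d(x,y))|^p\,dV_x$ by $C\|h\|_p^p$ one needs the pushforward of $dV_x$ under $x\mapsto d(x,y)$ to have uniformly bounded density against $d\rho$ (i.e.\ uniformly bounded areas of geodesic spheres, via the coarea formula), not merely bounded total volume; as phrased, ``integrated over a compact range against a bounded volume'' would fail for $h$ supported on a thin annulus, where $\|h\|_p^p$ is small but the total volume is not. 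Since that passage is offered only as an alternative to the Bishop--Gromov route, the proof as a whole stands.
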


Following, we give a new proof of the compactness about the operator \eqref{Operator-main-term}.
\begin{prop}[Compactness]\label{pro compact}
For all $r\in [1,q)$, where $q$ is defined as \eqref{critical exponent}, operator $I_\alpha: L^p(M^n)\rightarrow L^r(M^n)$ is compact.
\end{prop}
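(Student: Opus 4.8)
The plan is to prove compactness of $I_\alpha: L^p(M^n)\to L^r(M^n)$ for $r\in[1,q)$ by exploiting the continuous boundedness already granted by Proposition~\ref{prop sharp ineq} together with the compact Sobolev-type embedding between $L^q$ and $L^r$ on a finite-measure space. The key structural fact I would rely on is that $M^n$ is compact, so $\mathrm{Vol}_g(M^n)<\infty$ and $L^q(M^n)\hookrightarrow L^r(M^n)$ continuously for $r<q$ by H\"older's inequality. The composition strategy is: first show $I_\alpha$ maps bounded sets of $L^p$ into sets that are bounded in $L^q$ (hence uniformly integrable and tight, trivially, since the manifold is compact), and then upgrade boundedness in $L^q$ to precompactness in the weaker $L^r$ topology.

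More concretely, I would take a bounded sequence $\{f_k\}\subset L^p(M^n)$ with $\|f_k\|_{L^p}\le 1$ and aim to extract a subsequence for which $\{I_\alpha f_k\}$ converges in $L^r$. By the boundedness part of Proposition~\ref{prop sharp ineq}, $\{I_\alpha f_k\}$ is bounded in $L^q$. Since $L^p(M^n)$ is reflexive (as $1<p<\infty$), I may pass to a subsequence with $f_k\rightharpoonup f$ weakly in $L^p$. The crucial step is then to prove that $I_\alpha f_k\to I_\alpha f$ \emph{strongly} in $L^r$. For this I would use the smoothing provided by the kernel: write $I_\alpha f_k(x)=\int_{M^n}|x-y|_g^{\alpha-n}f_k(y)\,dV_y$, observe that for fixed $x$ the function $y\mapsto |x-y|_g^{\alpha-n}$ lies in $L^{p'}(M^n)$ (since $(n-\alpha)p'<n$ is equivalent to $p<n/\alpha$, which holds by hypothesis), so weak convergence gives pointwise convergence $I_\alpha f_k(x)\to I_\alpha f(x)$ for a.e.\ $x$. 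Combined with the uniform $L^q$ bound and $q>r$, a Vitali/uniform-integrability argument promotes this to $L^r$ convergence.

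The alternative, and perhaps cleaner, route is a direct Arzel\`a--Ascoli / Fr\'echet--Kolmogorov argument: estimate the modulus of continuity of $I_\alpha f$ by controlling $\|I_\alpha f(\cdot+h)-I_\alpha f(\cdot)\|_{L^r}$ through the continuity of translations applied to the kernel and Young's inequality (Lemma~\ref{lem Young}), which yields equicontinuity of the translates uniformly over the bounded set $\{f:\|f\|_{L^p}\le 1\}$; together with tightness (automatic on compact $M^n$) this gives precompactness in $L^r$ by the Riesz--Fr\'echet--Kolmogorov compactness criterion. I would likely present whichever of these requires the least machinery, but the weak-convergence-plus-uniform-integrability argument is the most transparent.

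The main obstacle is the borderline integrability of the kernel near the diagonal: the singularity $|x-y|_g^{\alpha-n}$ must be handled carefully to justify both the $L^{p'}$ membership used for pointwise convergence and the uniform integrability across the sequence. The delicate point is that at the endpoint $r=q$ compactness \emph{fails}, so the argument must genuinely use the strict inequality $r<q$; concretely, the gain comes from the embedding $L^q\hookrightarrow L^r$ on a finite-measure space being compact only when combined with the equi-integrability furnished by the $L^q$ bound, and I expect the bulk of the technical work to lie in making the uniform-integrability estimate quantitative and independent of $k$.
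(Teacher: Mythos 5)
There is a genuine gap at the crucial step of your main route. You claim that for fixed $x$ the kernel $y \mapsto |x-y|_g^{\alpha-n}$ lies in $L^{p'}(M^n)$ because ``$(n-\alpha)p'<n$ is equivalent to $p<n/\alpha$''. The equivalence is backwards: $(n-\alpha)p'<n$ means $\frac{p}{p-1}(n-\alpha)<n$, i.e. $p(n-\alpha)<n(p-1)$, i.e. $\alpha p>n$, i.e. $p>n/\alpha$. Under the standing hypothesis $1<p<n/\alpha$ one has $p'>\frac{n}{n-\alpha}$, so $\int_{M^n} |x-y|_g^{(\alpha-n)p'}\, dV_y=+\infty$: the singularity at $y=x$ is not $p'$-integrable. (This is no accident: if the kernel were in $L^{p'}$, the HLS inequality would follow trivially from H\"older with an $L^\infty$ conclusion, and $I_\alpha$ would not be the borderline operator it is.) Consequently weak $L^p$ convergence of $f_k$ does not by itself give $I_\alpha f_k(x)\to I_\alpha f(x)$ pointwise, and your Vitali step --- which is itself sound (a.e. convergence plus a uniform $L^q$ bound with $r<q$ does yield $L^r$ convergence on a finite measure space) --- has nothing to feed on. Note also that you cannot appeal to the a.e. convergence stated in Remark \ref{rem converge}, since that is derived from this very proposition.

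The missing idea is precisely the truncation used in the paper's proof: write $|x-y|_g^{\alpha-n}=K^\rho(|x-y|_g)+K_\rho(|x-y|_g)$, where $K^\rho(t)=t^{\alpha-n}\chi_{\{t>\rho\}}$ is bounded, hence trivially in $L^{p'}$ on the finite-volume manifold; your weak-convergence-to-pointwise argument then applies verbatim to $K^\rho * f_k$, and the uniform bound $|K^\rho * f_k(x)|\le \|K^\rho\|_{p'}\|f_k\|_p$ upgrades it to $L^r$ convergence by dominated convergence. The singular near-diagonal part $K_\rho$ is not treated pointwise at all but killed in norm: $\|K_\rho\|_s\le C\rho^{(\alpha-n)+n/s}$ for $s<\frac{n}{n-\alpha}$, so Young's inequality (Lemma \ref{lem Young}) with $1+\frac1r=\frac1p+\frac1s$ gives $\|K_\rho*(f_k-f)\|_r\le C\rho^{(\alpha-n)+n/s}\to 0$ as $\rho\to 0^+$. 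This is exactly where the strict inequality $r<q$ enters: $r<q$ forces $s<\frac{n}{n-\alpha}$, making the exponent $(\alpha-n)+n/s$ positive, whereas $r=q$ would require $s=\frac{n}{n-\alpha}$ and the smallness disappears --- a different mechanism from the one you conjectured (equi-integrability alone). Your alternative Fr\'echet--Kolmogorov route is only sketched and would need its own repair, since translations $x\mapsto x+h$ are not defined on $M^n$ and one would have to localize in charts; as written, the proposal does not contain a complete proof.
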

\begin{proof}
Take any bounded sequence $\{f_m\}$ in $L^p(M^n)$. Then, there exists a subsequence (still denoted by $\{f_m\}$) and some function $f\in L^p(M)$ such that
\begin{equation}\label{comp 1}
    f_m\rightharpoonup f \quad\text{weakly in}\quad L^p(M^n).
\end{equation}
It is known that the proof will be completed if it holds that
    $$I_\alpha f_m \rightarrow I_\alpha f \quad \text{strongly in} \quad L^r(M^n).$$

Denoted by $K^\rho(t)=t^{\alpha-n}\chi_{\{t>\rho\}}$ and $K_\rho(t)=t^{\alpha-n}-K^\rho(t)$ for $t>0$, where $\rho>0$ is a parameter to be chosen later. Then, we decompose the integral operator as
    $$I_\alpha f_m(x)=K^\rho *f_m(x) + K_\rho *f_m(x) \triangleq I_\alpha^1 f_m(x) +I_\alpha^2 f_m(x).$$

Since, for any fixed $x\in M^n$, $K^\rho(|x-y|_g)\in L^{p'}(M^n)$ with respect to $y$, then weak convergence implies that $K^\rho *f_m\rightarrow K^\rho *f$ pointwisely. Notice also that
    $$\bigl| K^\rho * f_m(x) \bigr|\leq \|K^\rho\|_{P'} \|f_m\|_p\leq C(\rho),$$
where $C(\rho)$ is independent of $x$ and $m$. So, by dominated convergence theorem, we have that
    $$K^\rho *f_m \rightarrow K^\rho f \quad \text{strongly in} \quad L^r(M^n).$$

Since
    $$\int_{M^n} K_\rho(|x-y|_g)^s dV_x\leq C\rho^{(\alpha-n)s+n},$$
where $0<s<\frac n{n-\alpha}$, then we take parameter $s>1$ satisfying $\frac 1r+1=\frac 1p+\frac 1s$ and get from the Young's inequality (see Lemma \ref{lem Young}) that
    $$\|K_\rho * (f_m-f)\|_r\leq C\rho^{(\alpha-n)+n/s}\|f_m-f\|_p\leq C\rho^{(\alpha-n)+n/s}.$$

By now, through choosing first $\rho$ small and then $m$ large, we deduce the claimed convergence in $L^r(M^n)$. %that
    %$$I_\alpha f_m \rightarrow I_\alpha f \quad \text{strongly in} \quad L^r(M^n).$$
\end{proof}

Based on the Proposition \ref{pro compact}, we have the following conclusions.
\begin{rem}\label{rem converge}
For any bounded sequence $\{f_m\}\subset L^p(M^n)$, there exists a subsequence (still denoted by $\{f_m\}$) and some function $f\in L^p(M^n)$ such that
\begin{gather*}
    f_m\rightharpoonup f \quad \text{weakly in} \quad L^p(M^n),\\
    I_\alpha f_m\rightharpoonup I_\alpha f\quad \text{weakly in} \quad L^q(M^n),\\
    I_\alpha f_m\rightarrow I_\alpha f\quad \text{strongly in} \quad L^r(M^n)
\end{gather*}
for all $r\in [1,q)$. Furthermore, $I_\alpha f_m\rightarrow I_\alpha f$ pointwisely a.e. in $M^n$.
\end{rem}

\section{Concentration-Compactness Lemma}\label{appendix CC-lemma}

%\begin{lem}[Br\'{e}zis-Lieb Lemma, Theorem 1 of \cite{BL1983}]\label{lem Brezis-Lieb}
%Suppose $f_n\rightarrow f$ a.e. and $\|f_n\|_p\leq C<+\infty$ for all $n$ and for some $0<p<+\infty$. Then
%    $$\lim_{n\rightarrow +\infty}\left\{ \|f_n\|_p^p -\|f_n-f\|_p^p \right\} =\|f\|_p^p.$$
%\end{lem}
%
%\begin{lem}[Lemma 1.2 of \cite{Lions3}]\label{lem of Lions}
%Let $\mu,\ \nu$ be two bounded nonnegative measures on $\mathbb{R}^n$ satisfying for some constant $C_0\geq 0$
%\begin{equation}\label{formula appendix 1}
%    \left(\int_{\mathbb{R}^n} |\varphi|^q d\nu\right)^{1/q} \leq C_0 \left(\int_{\mathbb{R}^n} |\varphi|^p d\mu\right)^{1/p}, \quad \forall \varphi\in C_0^\infty(\mathbb{R}^n)
%\end{equation}
%where $1\leq p<q\leq +\infty$. Then, there exist an at most countable set $J$, families $(x_j)_{j\in J}$ of distinct points in $\mathbb{R}^n$, $(\nu_j)_{j\in J}$ in $(0,\infty)$ such that
%    $$\nu=\sum_{j\in J}\nu_j\delta_{x_j}, \quad \mu\geq \sum_{j\in J}\nu_j^{p/q} \delta_{x_j}.$$
%Thus, in particular
%    $$\sum_{j\in J}\nu_j^{p/q}<+\infty.$$
%
%If in addition: $\nu(\mathbb{R}^n)^{1/q} \geq C_0 \mu(\mathbb{R}^n)^{1/p}$, $J$ reduces to a single point and $\nu=\gamma\delta_{x_0}=\gamma^{-p/q}\mu$, for some $x_0\in\mathbb{R}^n$ and for some $\gamma\geq 0$.
%\end{lem}
\begin{lem}\label{lem Concentrate-Compact}
Let $\{f_m\}\subset L^p(M^n)$ be a bounded nonnegative sequence and there exists some function $f\in L^p(M^n)$ such that
    $$f_m\rightharpoonup f \quad \text{weakly in} \quad L^p(M^n).$$
After passing to a subsequence, assume that $|I_\alpha f_m|^q dV_x$, $|f_m|^p dV_x$ converge weakly in the sense of measure to some bounded nonnegative measures $\nu,\ \mu$ on $M^n$. Then we have:

i) There exist some countable set $J$, a family $\{P_j: j\in J\}$ of distinct points in $M^n$, and a family $\{\nu_j: j\in J\}$ of nonnegative numbers such that
 \begin{equation}\label{formula CC 1}
    \nu=|I_\alpha f|^q dV_x+\sum_{j\in J} \nu_j\delta_{P_j},
 \end{equation}
where $\delta_{P_j}$ are the Dirac-mass of mass $1$ concentrated at $P_j\in M^n$;

ii)In addition we have
\begin{equation}\label{formula CC 2}
    \mu\geq |f|^pdV_x+\sum_{j\in J} \mu_j\delta_{P_j}
\end{equation}
for some family $\{\mu_j>0: j\in J\}$, where $\mu_j$ satisfy
\begin{equation}\label{formula CC 3}
    \nu_j^{1/q}\leq N_{p,\alpha}\mu_j^{1/p} \quad \text{for all} \quad j\in J.
\end{equation}
In particular, $\sum_{j\in J}\nu_j^{p/q}<+\infty$.
\end{lem}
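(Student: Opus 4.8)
The plan is to transplant Lions's first concentration-compactness lemma to the nonlocal operator $I_\alpha$, with the sharp Euclidean constant $N_{p,\alpha}$ entering through a localization at the concentration points. I would first dispose of the case $f\equiv 0$, so that $f_m\rightharpoonup 0$. The core step is to establish, for every $\phi\in C(M^n)$, the reverse-H\"older inequality between the limit measures
\[
\Bigl(\int_{M^n}|\phi|^q\,d\nu\Bigr)^{1/q}\le N_{p,\alpha,M}\Bigl(\int_{M^n}|\phi|^p\,d\mu\Bigr)^{1/p}.
\]
To prove it I split $\phi\,I_\alpha f_m=I_\alpha(\phi f_m)+T_\phi f_m$, where $T_\phi f_m(x)=\int_{M^n}\bigl(\phi(x)-\phi(y)\bigr)|x-y|_g^{\alpha-n}f_m(y)\,dV_y$. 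Since $\phi$ is Lipschitz on the compact manifold, the kernel of $T_\phi$ is dominated by $C|x-y|_g^{(\alpha+1)-n}$; this is strictly more regular, so $T_\phi$ maps $L^p$ compactly into $L^q$ exactly as in Proposition \ref{pro compact}, and hence $T_\phi f_m\to 0$ strongly in $L^q$ because $f_m\rightharpoonup 0$. Applying the HLS inequality \eqref{Roughly HLS M} to $I_\alpha(\phi f_m)$ and letting $m\to\infty$, using $\|\phi f_m\|_p^p\to\int_{M^n}|\phi|^p\,d\mu$ and $\|\phi\,I_\alpha f_m\|_q^q\to\int_{M^n}|\phi|^q\,d\nu$, yields the displayed inequality.

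Next I would read off the atomic structure. Approximating $\chi_E$ by continuous functions turns the reverse-H\"older inequality into $\nu(E)\le N_{p,\alpha,M}^{\,q}\,\mu(E)^{q/p}$ for every Borel set $E$; since $q/p>1$, the standard subdivision argument of Lions forces $\nu$ to be purely atomic, $\nu=\sum_{j\in J}\nu_j\delta_{P_j}$ with $J$ at most countable and $\mu\ge\sum_{j\in J}\mu_j\delta_{P_j}$, $\mu_j:=\mu(\{P_j\})$. To replace the manifold constant $N_{p,\alpha,M}$ by the sharp Euclidean constant $N_{p,\alpha}$ in the relation at each atom, I localize: testing the inequality of the previous paragraph with cut-offs supported in a small geodesic ball $B_\varepsilon(P_j)$ and using that, in normal coordinates at $P_j$, the distance $|x-y|_g$, the volume element and the ball are Euclidean up to $O(\varepsilon)$, the relevant best constant on $B_\varepsilon(P_j)$ tends to $N_{p,\alpha}$ as $\varepsilon\to 0$. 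Shrinking the cut-off to $P_j$ then gives $\nu_j^{1/q}\le N_{p,\alpha}\,\mu_j^{1/p}$.

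For general $f$ I set $g_m:=f_m-f\rightharpoonup 0$. Since $I_\alpha f_m\to I_\alpha f$ pointwise a.e.\ and stays bounded in $L^q$ (Remark \ref{rem converge}), the Brezis--Lieb lemma gives $|I_\alpha f_m|^q-|I_\alpha g_m|^q\to|I_\alpha f|^q$ in $L^1(M^n)$, whence $\nu=|I_\alpha f|^q\,dV_x+\widetilde\nu$, where $\widetilde\nu$ is the weak limit of $|I_\alpha g_m|^q\,dV_x$. Applying the previous two paragraphs to $g_m$ identifies $\widetilde\nu=\sum_{j\in J}\nu_j\delta_{P_j}$ with the sharp relation, which is exactly assertion~(i). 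For assertion~(ii), weak lower semicontinuity of the convex functional $u\mapsto\int_{M^n}\phi\,|u|^p\,dV_x$ (for $\phi\ge 0$) gives $\int_{M^n}\phi\,|f|^p\,dV_x\le\int_{M^n}\phi\,d\mu$ for all such $\phi$, i.e.\ $\mu\ge|f|^p\,dV_x$; moreover, since $|f|^p\,dV_x$ has no atoms, the reverse triangle inequality in $L^p$ on balls shrinking to $P_j$ yields $\mu(\{P_j\})\ge\mu_j$. As the absolutely continuous and the atomic parts of $\mu$ are mutually singular, these two bounds combine into $\mu\ge|f|^p\,dV_x+\sum_{j\in J}\mu_j\delta_{P_j}$. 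Finally, $\nu_j^{p/q}\le N_{p,\alpha}^{\,p}\mu_j$ and $\sum_{j\in J}\mu_j\le\mu(M^n)<\infty$ give $\sum_{j\in J}\nu_j^{p/q}<\infty$.

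The main obstacle is the content of the first two paragraphs: controlling the nonlocal commutator $T_\phi$ and, above all, carrying out the localization that upgrades $N_{p,\alpha,M}$ to the sharp Euclidean constant $N_{p,\alpha}$ at each concentration point. The absence of an exact product rule for the nonlocal operator $I_\alpha$ is precisely what makes this delicate, in contrast to the local gradient appearing in the classical Sobolev version of the lemma.
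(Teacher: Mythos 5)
Your proposal is correct and follows essentially the same route as the paper: reduction to $f\equiv 0$ via Brezis--Lieb, the commutator decomposition $\varphi\, I_\alpha f_m = I_\alpha(\varphi f_m) + T_\varphi f_m$ with the kernel gain $|x-y|_g^{\alpha+1-n}$ yielding strong $L^q$ convergence of the commutator, Lions's classical argument for atomicity, and localization in normal coordinates with $(1\pm\epsilon)$ metric comparison to upgrade $N_{p,\alpha,M}$ to the sharp Euclidean constant $N_{p,\alpha}$ at each atom. The only organizational difference is that you prove the atom inequality for the measures associated to $g_m=f_m-f$ and then transfer the atoms back to $\mu$ by a reverse triangle inequality, whereas the paper localizes directly on the original sequence $f_m$ and shows the $f$-dependent remainder terms vanish as the cutoff parameter $\lambda\rightarrow 0^+$.
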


{\bf Proof of i).}
By the conditions of the sequence $\{f_m\}\subset L^p(M^n)$, we know from the Remark \ref{rem converge} that
\begin{gather*}
    I_\alpha f_m\rightharpoonup I_\alpha f\quad \text{weakly in} \quad L^q(M^n),\\
    I_\alpha f_m\rightarrow I_\alpha f\quad \text{strongly in} \quad L^r(M^n)\\
    I_\alpha f_m\rightarrow I_\alpha f \quad \text{pointwisely a.e. in} \quad M^n,
\end{gather*}
where $r\in [1,q)$. Then, Br\'{e}zis-Lieb Lemma leads that
\begin{align*}
    0=& \lim_{m\rightarrow +\infty} \int_{M^n} \left( |I_\alpha f_m|^q -|I_\alpha(f_m-f)|^q -|I_\alpha f|^q \right) dV_x\\
    =& \int_{M^n} d\nu -\int_{M^n} |I_\alpha f|^q dV_x -\lim_{m\rightarrow+\infty} |I_\alpha (f_m-f)|^q dV_x.
\end{align*}
So, it is sufficient to discuss the case $f\equiv 0$. By the classical argument of Lions (see \cite{Lions3, Lions4}), it is sufficient to prove
\begin{equation}\label{formula CC 4}
    \left(\int_{M^n} |\varphi|^q d\nu \right)^{1/q} \leq N_{p,\alpha,M} \left(\int_{M^n} |\varphi|^p d\mu \right)^{1/p}, \quad \forall \varphi\in C_0^\infty(M^n).
\end{equation}

Since, for any $\varphi(x)\in C_0^\infty(M^n)$,
\begin{align*}
    &\left( \int_{M^n} |\varphi(x) I_\alpha f_m|^q dV_x \right)^{1/q}\\
    \leq & \left( \int_{M^n} |I_\alpha (\varphi f_m)|^q dV_x\right)^{1/q} + \left( \int_{M^n} |\varphi(x) I_\alpha f_m -I_\alpha(\varphi f_m)|^q dV_x \right)^{1/q}\\
    \leq & N_{p,\alpha,M} \left( \int_{M^n} |\varphi f_m|^p dV_x\right)^{1/p}  + \left( \int_{M^n} |\varphi(x) I_\alpha f_m -I_\alpha(\varphi f_m)|^q dV_x \right)^{1/q},
\end{align*}
then we get as $m\rightarrow +\infty$ that
\begin{align*}
    \left( \int_{M^n} |\varphi|^q d\nu \right)^{1/q} \leq & N_{p,\alpha,M} \left( \int_{M^n} |\varphi|^p d\mu \right)^{1/p}\\ &+ \lim_{m\rightarrow +\infty} \left( \int_{M^n} |\varphi(x) I_\alpha f_m -I_\alpha(\varphi f_m)|^q dV_x \right)^{1/q}.
\end{align*}
So, we can obtain \eqref{formula CC 4} if
\begin{equation}\label{formula CC 5}
    \lim_{m\rightarrow +\infty} \left( \int_{M^n} |\varphi(x) I_\alpha f_m -I_\alpha(\varphi f_m)|^q dV_x \right)^{1/q}=0.
\end{equation}

Notice that
\begin{align*}
    |\varphi(x) I_\alpha f_m -I_\alpha(\varphi f_m)| = & \Bigl|\int_{M^n} (\varphi(x)-\varphi(y)) |x-y|_g^{\alpha-n} f_m(y) dV_y\Bigr|\\
    \leq & C \int_{M^n} |x-y|_g^{\alpha+1-n} |f_m(y)| dV_y
\end{align*}
and
    $$R(x,y):=(\varphi(x)-\varphi(y)) |x-y|_g^{\alpha-n} \in L^r(M^n),$$
where $r\leq +\infty$ if $\alpha+1-n\geq 0$ and $r<\frac n{n-\alpha-1}$ if $\alpha+1-n<0$. If $\alpha+1-n\geq 0$, we can prove \eqref{formula CC 5} by dominated convergence theorem. While for the case $\alpha+1-n<0$, we obtain through the Hardy-Littlewood-Sobolev inequalities \eqref{Roughly HLS M} that
\begin{equation*}
    \int_{M^n} R(x,y) f_m(y) dV_y \in L^s(M^n),
\end{equation*}
where $s=(\frac 1p-\frac{\alpha+1}n)^{-1}>q$. Furthermore, repeating the proof of Proposition \ref{pro compact}, we have
    $$\int_{M^n} R(x,y) f_m(y) dV_y \rightarrow \int_{M^n} R(x,y) f(y) dV_y=0 \quad \text{strongly in} \quad L^q(M^n).$$
So, we get \eqref{formula CC 5} and complete the proof of i). \hfill$\Box$

\medskip

{\bf Proof of ii).}
Since
    $$f_m\rightharpoonup f \quad \text{weakly in} \quad L^p(M^n),$$
then, $\mu\geq |f|^p dV_x$. So, we just have to show that for each fixed $j\in J$,
    $$\nu_j^{1/q}=\nu(\{P_j\})^{1/q}\leq N_{p,\alpha} \mu(\{P_j\})^{1/p}=N_{p,\alpha}\mu_j^{1/p}.$$

For point $P_j\in M^n$, choose a neighbourhood $\Omega_{P_j}\subset M^n$ so that for $\delta>0$ small enough, in a normal coordinate,  $\exp(B_{\delta}) \subset \Omega_{P_j}$ and
    $$(1-\epsilon)I\leq g(x)\leq(1+\epsilon)I,\quad \forall x\in B_{\delta}.$$
Take $\varphi_\lambda(x)=\varphi(\frac x\lambda)$, where $\varphi(x)\in C_0^\infty(\mathbb{R}^n)$ satisfies $0\leq\varphi(x)\leq 1,\ \varphi(0)=1,\ \text{supp}\ \varphi\subset B_1$ and $\lambda\in (0,\delta)$. Then,
\begin{align*}
    I_\alpha((\varphi_\lambda\circ \exp^{-1}) \cdot f_m) =& \int_{M^n} (\varphi_\lambda\circ\exp^{-1})(y) f_m(y) |x-y|_g^{\alpha-n} dV_g(y)\\
    =&\int_{B_\delta} \varphi_\lambda(y) (f_m\circ\exp)(y) |x-y|_g^{\alpha-n} \sqrt{\det g(y)} dy\\
    \leq & \frac{(1+\epsilon)^{n/2}} {(1-\epsilon)^{n-\alpha}} \int_{B_\delta} \varphi_\lambda(y) (f_m\circ\exp)(y) |x-y|^{\alpha-n} dy
\end{align*}
and
\begin{align*}
    &\left(\int_{\exp(B_\delta)} |I_\alpha ((\varphi_\lambda\circ\exp^{-1})\cdot f_m)|^q dV_x \right)^{1/q}\\
    \leq & (1+\epsilon)^{n/(2q)} \left( \int_{B_\delta} |I_\alpha ((\varphi_\lambda\circ\exp^{-1})\cdot f_m)|^q dx\right)^{1/q}\\
    \leq &\frac{(1+\epsilon)^{\frac n2(1+\frac 1q)}} {(1-\epsilon)^{n-\alpha}} \left( \int_{B_\delta} \Bigl| \int_{B_\delta} \varphi_\lambda(y) (f_m\circ\exp)(y) |x-y|^{\alpha-n} dy\Bigr|^q dx\right)^{1/q}\\
    \leq &\frac{(1+\epsilon)^{\frac n2(1+\frac 1q)}} {(1-\epsilon)^{n-\alpha}} N_{p,\alpha} \left( \int_{B_\delta} |\varphi_\lambda(y) (f_m\circ\exp)(y)|^p dy \right)^{1/p}\\
    \leq & \frac{(1+\epsilon)^{\frac n2(1+\frac 1q)}} {(1-\epsilon)^{\frac n{2p}+n-\alpha}} N_{p,\alpha} \left( \int_{\exp(B_\delta)} |(\varphi_\lambda\circ\exp^{-1}) \cdot f_m|^p dV_y \right)^{1/p}.
\end{align*}
So,
\begin{align}\label{formula CC 6}
    &\left( \int_{M^n} |(\varphi_\lambda\circ\exp^{-1}) \cdot I_\alpha f_m|^q dV_x\right)^{1/q}\nonumber\\
    \leq& \left(\int_{\exp(B_\delta)} |I_\alpha ((\varphi_\lambda\circ\exp^{-1})\cdot f_m)|^q dV_x \right)^{1/q}\nonumber\\
    &\quad +\left( \int_{\exp(B_\delta)} |(\varphi_\lambda\circ\exp^{-1}) \cdot I_\alpha f_m -I_\alpha((\varphi_\lambda\circ\exp^{-1}) \cdot f_m)|^q dV_x\right)^{1/q}\nonumber\\
    \leq & \frac{(1+\epsilon)^{\frac n2(1+\frac 1q)}} {(1-\epsilon)^{\frac n{2p}+n-\alpha}} N_{p,\alpha} \left( \int_{\exp(B_\delta)} |(\varphi_\lambda\circ\exp^{-1}) \cdot f_m|^p dV_y \right)^{1/p}+\textbf{I},
\end{align}
where
    $$\textbf{I}:=\left( \int_{\exp(B_\delta)} |(\varphi_\lambda\circ\exp^{-1}) \cdot I_\alpha f_m -I_\alpha((\varphi_\lambda\circ\exp^{-1}) \cdot f_m)|^q dV_x\right)^{1/q}.$$
Repeating the argument of \eqref{formula CC 5}, we have, as $m\rightarrow +\infty$,
    $$\textbf{I}\rightarrow \left( \int_{\exp(B_\delta)} |(\varphi_\lambda\circ\exp^{-1}) \cdot I_\alpha f -I_\alpha((\varphi_\lambda\circ\exp^{-1}) \cdot f)|^q dV_x\right)^{1/q}.$$
So, letting $m\rightarrow +\infty$ leads
\begin{align}\label{formula CC 7}
    &\left( \int_{M^n} |\varphi_\lambda\circ\exp^{-1}|^q d\nu \right)^{1/q}\nonumber\\
    \leq & \frac{(1+\epsilon)^{\frac n2(1+\frac 1q)}} {(1-\epsilon)^{\frac n{2p}+n-\alpha}} N_{p,\alpha} \left( \int_{M^n} |(\varphi_\lambda\circ\exp^{-1}) |^p d\mu \right)^{1/p}\nonumber\\
    &\quad +\left(\int_{\exp(B_\delta)} |(\varphi_\lambda\circ\exp^{-1}) \cdot I_\alpha f -I_\alpha((\varphi_\lambda\circ\exp^{-1}) \cdot f)|^q dV_x\right)^{1/q}.
\end{align}
Since
\begin{align*}
    &\int_{\exp(B_\delta)} |(\varphi_\lambda\circ\exp^{-1}) \cdot I_\alpha f|^q dV_x\rightarrow 0\quad \text{as}\quad \lambda\rightarrow 0^+
\intertext{and}
    &\left(\int_{\exp(B_\delta)} |I_\alpha((\varphi_\lambda\circ\exp^{-1}) \cdot f)|^q dV_x\right)^{1/q}\\
    \leq & C\left( \int_{B_\delta} |(\varphi_\lambda\circ\exp^{-1}) \cdot f|^p dV_y\right)^{1/p}\rightarrow 0\quad \text{as}\quad \lambda\rightarrow 0^+,
\end{align*}
then we can complete the proof by letting $\lambda\rightarrow 0^+$ and $\epsilon\rightarrow 0^+$.
\hfill$\Box$

\section{Estimate and criteria of existence \label{Section_3}}

{\bf Proof of Proposition \ref{pro estimate}.}
For small positive constant $\lambda>0$, recall that  $f_\lambda(x)$ and $g_\lambda(y)$ are given in \eqref{Extremal Rn 3}.
Take
    $$\tilde{f}(x)=\begin{cases} f_\lambda(x), &\text{ in } B_\delta(0)\\ 0, &\text{ in } \mathbb{R}^n\backslash B_\delta(0) \end{cases} \quad \text{and} \quad \tilde{g}(y)=\begin{cases} g_\lambda(y), &\text{ in } B_\delta(0)\\  0, &\text{ in } \mathbb{R}^n\backslash B_\delta(0) \end{cases}$$
where $\delta>0$ is a fixed constant to be determined later. Then, for small enough $\lambda$ and by \eqref{Extremal Rn 2},
\begin{align}\label{est-1}
    &\int_{\mathbb{R}^n}\int_{\mathbb{R}^n} \tilde{f}(x) \tilde{g}(y) |x-y|^{\alpha-n}dxdy \nonumber\\
    =&\int_{\mathbb{R}^n}\int_{\mathbb{R}^n} f_\lambda(x) g_\lambda(y) |x-y|^{\alpha-n}dxdy \nonumber\\ &-\int_{|x|>\delta}\int_{\mathbb{R}^n} f_\lambda(x) g_\lambda(y) |x-y|^{\alpha-n}dxdy \nonumber\\
    &-\int_{\mathbb{R}^n}\int_{|y|>\delta} f_\lambda(x) g_\lambda(y) |x-y|^{\alpha-n}dxdy \nonumber\\
    &+\int_{|x|>\delta}\int_{|y|>\delta} f_\lambda(x) g_\lambda(y) |x-y|^{\alpha-n}dxdy \nonumber\\
    =&N_{p,\alpha}-N_{p,\alpha}\int_{|x|>\delta} f_\lambda^p(x) dx-N_{p,\alpha}\int_{|y|>\delta} g_\lambda^t(y) dy\nonumber\\
    &+\int_{|x|>\delta}\int_{|y|>\delta} f_\lambda(x) g_\lambda(y) |x-y|^{\alpha-n}dxdy \nonumber\\
    := &N_{p,\alpha} -\textbf{I} -\textbf{II} +\textbf{III},
\end{align}
where, for fixed $\delta>0$ and as $\lambda\rightarrow 0^+$,
\begin{align*}
    &\textbf{I}:=N_{p,\alpha}\int_{|x|>\delta} f_\lambda^p(x) dx=N_{p,\alpha}\int_{|x|>\delta/\lambda} f^p(x) dx\rightarrow 0,\\
    &\textbf{II}:=N_{p,\alpha}\int_{|y|>\delta} g_\lambda^t(y) dy\rightarrow 0,\\
    &\textbf{III}:=\int_{|x|>\delta}\int_{|y|>\delta} f_\lambda(x) g_\lambda(y) |x-y|^{\alpha-n}dxdy\\
    &\hspace{0.5cm}\leq C \left(\int_{|x|>\delta} f_\lambda^p dx\right)^{1/p} \left(\int_{|y|>\delta} g_\lambda^t dx\right)^{1/t}\rightarrow 0.
\end{align*}
So, for small enough  $\lambda$,
\begin{align}\label{est-2}
    &\frac{ \int_{\mathbb{R}^n} \int_{\mathbb{R}^n} \tilde{f}(x) \tilde{g}(y) |x-y|^{\alpha-n}dxdy} {\|\tilde{f}\|_{L^p(\mathbb{R}^n)} \|\tilde{g}\|_{L^t(\mathbb{R}^n)}} \nonumber\\
    \geq &\frac{N_{p,\alpha}-\textbf{I}-\textbf{II}} {\|f_\lambda\|_{L^p(\mathbb{R}^n)} \|g_\lambda\|_{L^t(\mathbb{R}^n)}} =N_{p,\alpha}-\textbf{I}-\textbf{II}.
\end{align}

For any given point $P\in M^n$, choose a neighbourhood $\Omega_P\subset M^n$ so that for $\delta>0$ small enough, in a normal coordinate,  $\exp(B_{\delta})\subset\Omega_P$ and
$$(1-\epsilon)I\leq g(x)\leq(1+\epsilon)I,\quad \forall x\in B_{\delta}.$$
Thus,
$$(1-\epsilon)|x-y|\leq|x-y|_g\leq(1+\epsilon)|x-y|,  \ \  \quad \forall x, \ y \in B_{\delta}.$$

In the normal coordinates with respect to the center $P\in M^n$, let
\begin{align*}
    u(x)=\begin{cases} f_\lambda(\exp^{-1}(x)), & \text{in}\quad \exp(B_{\delta})\\ 0, &\text{in}\quad M^n\backslash\exp(B_{\delta}) \end{cases}
\intertext{and}
     v(y)=\begin{cases} g_\lambda(\exp^{-1}(y)), & \text{in}\quad \exp(B_{\delta})\\ 0, &\text{in}\quad M^n\backslash\exp(B_{\delta}). \end{cases}
\end{align*}
Then
\begin{align}
    &\int_{M^n}|u|^p dV_x \leq (1+\epsilon)^{\frac n2} \int_{B_\delta(0)} |f_\lambda(x)|^p dx, \nonumber\\
    &\int_{M^n}|v|^t dV_y \leq (1+\epsilon)^{\frac n2} \int_{B_\delta(0)} |g_\lambda(y)|^t dy, \nonumber\\
    &\int_{M^n}\int_{M^n} u(x)v(y) |x-y|_g^{\alpha-n} dV_xdV_y\nonumber\\
    &\quad =\int_{B_{\delta}(0)} \int_{B_{\delta}(0)} \frac{u(x)v(y)} {|x-y|_g^{n-\alpha}} \sqrt{\det g(x)} \sqrt{\det g(y)} dxdy \nonumber\\
    &\quad \geq \int_{B_{\delta}(0)} \int_{B_{\delta}(0)} \frac{f_\lambda(x) g_\lambda(y)} {(1+\epsilon)^{n-\alpha} |x-y|^{n-\alpha}} (1-\epsilon)^n dxdy \nonumber\\
    &\quad =\frac{(1-\epsilon)^n} {(1+\epsilon)^{n-\alpha}} \int_{B_{\delta}(0)} \int_{B_{\delta}(0)} \frac{f_\lambda(x)g_\lambda(y)} {|x-y|^{n-\alpha}}dxdy.
 \label{est-3}
\end{align}
Thus
\begin{align*}
    N_{p,\alpha,M}&\geq \frac{\int_{M^n} \int_{M^n} {u(x)v(y)} |x-y|_g^{\alpha-n} dV_xdV_y} {\|u\|_{L^p(M^n)} \|v\|_{L^t(M^n)}}\\
    &\geq \frac{ \frac{(1-\epsilon)^{n}} {(1+\epsilon)^{n-\alpha}} \int_{B_{\delta}(0)} \int_{B_{\delta}(0)} f_\lambda(x) g_\lambda(y) |x-y|^{\alpha-n} dxdy} {(1+\epsilon)^{\frac n2 (\frac 1p+\frac 1t)} \|f_\lambda\|_{L^p(B_\delta(0))} \|g_\lambda\|_{L^t(B_\delta(0))}} \\
    &\geq \frac{(1-\epsilon)^n} {(1+\epsilon)^{\frac n2(\frac 1p+\frac 1t)+n-\alpha}} \left(N_{p,\alpha} -\textbf{I} -\textbf{II}\right).
\end{align*}
Sending  $\epsilon$ and  $\lambda$ to $0$, we obtain the estimate. \hfill$\Box$

\bigskip

{\bf Prof of Theorem \ref{thm exist}.}
Take a maximizing nonnegative sequence $\{f_m(x)\}\subset L^p(M^n)$ satisfying $\int_{M^n}f_m^p dV_x=1$ and
\begin{equation}\label{formula exist 1}
    \|I_\alpha f_m\|_{L^q(M^n)}\rightarrow N_{p,\alpha,M}, \quad\text{as}\quad m\rightarrow +\infty.
\end{equation}
Then, there exist a subsequence of $\{f_m\}$ (still denoted by $\{f_m\}$) and some function $f\in L^p(M^n)$ such that
    $$f_m \rightharpoonup f \quad \text{weakly in}\quad L^p(M^n).$$
Because of the Hardy-Littlewood-Sobolev inequalities \eqref{Roughly HLS M}, we know that
\begin{equation}\label{formula exist 2}
    \mu_m=|f_m|^p dV_x,\quad \nu_m=|I_\alpha f_m|^q dV_x
\end{equation}
are two families of bounded measures. So, there exist two nonnegative bounded measures $\mu$ and $\nu$ on $M^n$ such that
    $$\mu_m\rightharpoonup \mu, \nu_m\rightharpoonup \nu$$
weakly in the sense of measures.

Applying the Concentration-Compactness Lemma (see Lemma \ref{lem Concentrate-Compact}), we have
\begin{equation}\label{formula exist 3}
    \nu=|I_\alpha f|^q dV_x+\sum_{j\in J} \nu_j\delta_{P_j}, \quad \mu\geq |f|^pdV_x+\sum_{j\in J} \mu_j\delta_{P_j},
\end{equation}
and $\nu_j^{1/q}\leq N_{p,\alpha}\mu_j^{1/p}$ for all $j\in J$. Since $\int_{M^n}d\mu =\lim_{m\rightarrow+\infty} \int_{M^n}|f_m|^p dV_x =1,$ then $\int_{M^n}|f|^p dV_x\leq 1$ and $\mu_j\leq 1,\ j\in J$.

We claim that $\mu_j=0,\ j\in J$, which deduce that $\nu_j=0,\ j\in J$.

In fact, otherwise, combining \eqref{formula exist 3} and the fact $\frac qp>1$, we have
\begin{align}\label{formula exist 4}
    N_{p,\alpha,M}^q&=\lim_{m\rightarrow+\infty} \int_{M^n} |I_\alpha f_m|^q dV_x =\int_{M^n} d\nu\nonumber\\
    &=\int_{M^n} |I_\alpha f|^q dV_x +\sum_{j\in J} \nu_j \nonumber\\
    &\leq N_{p,\alpha,M}^q \|f\|_{L^p(M^n)}^q +\sum_{j\in J} N_{p,\alpha}^q\mu_j^{q/p} \nonumber\\
    &< N_{p,\alpha,M}^q \left(\int_{M^n} |f|^p dV_x\right)^{q/p} +\sum_{j\in J} N_{p,\alpha,M}^q\mu_j^{q/p} \nonumber\\
    &\leq N_{p,\alpha,M}^q\left(\int_{M^n} |f|^p dV_x+\sum_{j\in J}\mu_j\right)^{q/p}\nonumber\\
    &=N_{p,\alpha,M}^q \left(\int_{M^n}d\mu\right)^{q/p} =N_{p,\alpha,M}^q,
\end{align}
which is a contradiction.

Repeating the process of \eqref{formula exist 4}, we have that
    $$N_{p,\alpha,M}^q=\int_{M^n}|I_\alpha f|^q dV_x \quad \text{and} \quad \int_{M^n}|f|^p dV_x=1,$$
i.e., $f$ is a maximizer.
\hfill$\Box$

\begin{appendix}
\titleformat{\section}{\Large\bfseries}{Appendix\ \thesection} {1em}{}

\end{appendix}

%----------------------------------------------------------------------------------------------

%%% bibliography

\end{document}